\documentclass[11pt]{article}

\usepackage[T1]{fontenc}
\usepackage[utf8]{inputenc}
\usepackage{lmodern}
\usepackage{microtype}
\usepackage{amsmath,amssymb,amsthm,mathtools,mleftright}
\usepackage{xcolor}
\usepackage{tikz}
\usepackage{pgfplots}
\pgfplotsset{compat=1.18}
\usepackage{hyperref}
\usepackage[nameinlink,capitalise]{cleveref}

\hypersetup{
  colorlinks=true,
  linkcolor=blue,
  citecolor=blue,
  urlcolor=blue
}

\newtheorem{theorem}{Theorem}[section]
\newtheorem{proposition}[theorem]{Proposition}
\newtheorem{corollary}[theorem]{Corollary}
\newtheorem{lemma}[theorem]{Lemma}
\newtheorem{remark}[theorem]{Remark}
\newtheorem{definition}[theorem]{Definition}

\newcommand{\R}{\mathbb{R}}
\newcommand{\SL}{\operatorname{SL}}
\newcommand{\GLp}{\operatorname{GL}^{+}}
\newcommand{\SO}{\operatorname{SO}}
\newcommand{\Cof}{\operatorname{Cof}}
\newcommand{\arcosh}{\operatorname{arcosh}}
\newcommand{\arsinh}{\operatorname{arcsinh}}
\newcommand{\sgn}{\operatorname{sgn}}

\makeatletter
\let\@fnsymbol\@arabic
\makeatother

\usepackage{bbm}
\newcommand{\id}{{\boldsymbol{\mathbbm{1}}}}

\usepackage{subcaption}

\numberwithin{equation}{section}
\title{Polyconvexity for incompressible inversion-symmetric energies of Valanis--Landel type}
\author{Ionel-Dumitrel Ghiba\thanks{Corresponding author: 
		Ionel-Dumitrel Ghiba, \ \  Department of Mathematics, Alexandru Ioan Cuza University of Ia\c si,  Blvd.
		Carol I, no. 11, 700506 Ia\c si,
		Romania; and  Octav Mayer Institute of Mathematics of the
		Romanian Academy, Ia\c si Branch,  700505 Ia\c si, email:  dumitrel.ghiba@uaic.ro}, \quad Maximilian P. Wollner\thanks{Maximilian P. Wollner,  \ \   Institute of Biomechanics, Graz University of Technology, Stremayrgasse 16/2, 8010, Graz, Austria, email: wollner@tugraz.at} 
	\quad and   \quad   Patrizio Neff\,\thanks{Patrizio Neff,  \  \ Head of Lehrstuhl f\"{u}r Nichtlineare Analysis und Modellierung, Fakult\"{a}t f\"{u}r
		Mathematik, Universit\"{a}t Duisburg-Essen,  Thea-Leymann Str. 9, 45127 Essen, Germany, email: patrizio.neff@uni-due.de}
}
\date{}

\begin{document}
\maketitle

\begin{abstract}
\noindent {Let $\lambda_i = \nu_i(F)$ denote the three singular values of
the deformation gradient $F \in \GLp(3)$.} We consider the family of incompressible isotropic energies
$
  W_\psi(F)=\sum_i \psi\mleft(|\!\log\lambda_i|\mright)$ with $\psi\colon[0,\infty)\mapsto\R$.
Set
$g(s)=\psi\mleft(\arcosh\frac{s}{2}\mright)$ for all $s\geq2$.
If $g$ has a convex and non-decreasing extension $\bar g$ to $[0,\infty)$,
then $W_\psi$ is the restriction to $\SL(3)$ of the explicit polyconvex function \ {$
  F\mapsto\sum_i \bar g\mleft(\nu_i(F+\Cof F)\mright).
$}
The proof uses the identity $\nu_i(F+\Cof F)=\lambda_i+\lambda_i^{-1}$, up to permutation,
on $\SL(3)$ and Ball's convexity theorem for functions of the singular values.
We also give a direct proof along rank-one lines contained in $\SL(3)$ and
derive a convenient one-dimensional differential sufficient condition. In
particular,
$
  F\mapsto\sum_i e^{\log^2\!\lambda_i}
$
is rank-one convex on $\SL(3)$ and possesses the stated polyconvex extension.
A simple-shear computation shows that scalar convexity in $\log \lambda_i$ alone is
insufficient; the quadratic Hencky energy $\sum_i \log^2\lambda_i$ is not
rank-one convex on $\SL(3)$.
\end{abstract}

\medskip
\noindent\textbf{Keywords.}
polyconvexity; rank-one convexity; incompressibility; logarithmic strain;
singular values; inversion symmetry; Valanis--Landel energies; Legendre--Hadamard ellipticity.

\medskip
\noindent\textbf{2020 Mathematics Subject Classification.}
74B20; 74A20; 26B25; 74G65.
\section{Introduction}

Legendre--Hadamard ellipticity is widely viewed  as a
necessary constitutive requirement for material stability; see, e.g.,
\cite{ZubovRudev,Antman95}.
However, its verification for a given elastic energy is burdensome and, in
most cases, is not amenable to a direct analytical check. This statement still applies to the  incompressible case \cite{ZubovRudev,aubert1995necessary,Ball1976}. A stronger and often more tractable condition is Ball's polyconvexity, which implies rank-one convexity and, for sufficiently smooth energies, Legendre--Hadamard ellipticity 
\cite{Ball1976}.

In this note, we investigate elastic energies of Valanis--Landel type. The Valanis--Landel hypothesis represents an isotropic strain energy as a
separable symmetric function of the principal stretches; see
\cite{valanis1967strain,Ogden83}.  Many energy formulae on the incompressibility constraint are of this form, e.g., Mooney--Rivlin energy, Neo-Hooke energy, Ogden energy, etc. More precisely, an isotropic energy $W\colon\GLp(3)\mapsto\R$ is said to be of Valanis--Landel type if
there exists a scalar function $w\colon(0,\infty)\mapsto\R$ such that
\begin{equation}\label{eq:VL}
  W(F)=\sum_{i=1}^3 w\mleft(\lambda_i\mright),
\end{equation}
where
\begin{equation}\label{eq:lambdas}
  \lambda_1=\nu_1(F)>0,\qquad \lambda_2=\nu_2(F)>0,\qquad\text{and}\qquad\lambda_3=\nu_3(F)>0
\end{equation}
denote the principal stretches, i.e., the singular values of $F$. Singular values are always counted with multiplicity; unless an ordering is explicitly imposed, identities
involving the index $i$ are understood up to permutation.

Valanis-Landel originally proposed for moderate stretches $w(\lambda)=2\mu\, \lambda\,(\log \lambda-1)$ which corresponds to
\begin{equation}\label{eq:VLo}
  W(F)=2\mu \langle U, \log U-\id_3 \rangle,
\end{equation} 
an energy which has  already been  proposed by G.F. Becker in 1893 (see \cite{NBecker}).

Since  the Valanis-Landel form is intuitive, simple and separable, it
might suggest that Legendre--Hadamard ellipticity on $\SL(3)$ also takes
a simple form. For example, one may consider the Biot-type energy
\begin{equation}
  W_{\rm Biot}(F)=\|U-\id_3\|^2=\sum_{i=1}^3\mleft(\lambda_i-1\mright)^2\qquad\text{with}\qquad w(\lambda)=(\lambda-1)^2.
\end{equation}

The Valanis-Landel  class includes many standard models in
nonlinear elasticity and has also been used in the construction of polyconvex
extensions of logarithmic strain energies; see \cite{MartinGhibaNeff2019}.
Further developments, interpretations and applications of the separable
Valanis--Landel representation can be found in
\cite{PengLandel1972,Landel1998,Rivlin2003,rivlin2006relation,Valanis2022,SussmanBathe2009}.

However, the simple convexity of $w$ with respect to the stretch variable alone is not
sufficient for Legendre--Hadamard ellipticity, already in the planar case; see
\cite{MartinEtAlBiot2025}.
Thus, our overarching aim is to obtain tractable conditions on the scalar
function $w$ which ensure Legendre--Hadamard ellipticity. In order to
facilitate the approach, we additionally impose the structural condition of term-wise inversion symmetry, i.e.,
\begin{equation}\label{eq:inversion-w}
  w(\lambda)=w(\lambda^{-1})\qquad\forall\,\lambda>0
\end{equation}
For a recent systematic treatment of inversion symmetry in separable line,
area and volume elastic energies, see
\cite{VitucciTrentadueDeTommasi2026}.
Equivalently, one may always write
\begin{equation}\label{eq:wpsi}
  w(\lambda)=\psi(|\!\log\lambda|)\qquad\text{and}\qquad \psi(x)=w(e^x)\qquad\forall\,x\geq0.
\end{equation}

For comparison, the incompressible Mooney--Rivlin energy
\cite{mooney1940theory,rivlin1948large} can be written on $\SL(3)$ as
\begin{equation}\label{eq:MR}
  W_{\mathrm{MR}}(F)
  =\alpha\,\lVert F\rVert^2+\beta\,\lVert\Cof F\rVert^2=\alpha\sum_{i=1}^3\lambda_i^2
   +\beta\sum_{i=1}^3\lambda_i^{-2}\qquad\forall\,\alpha,\beta\geq0.
\end{equation}
It is both of Valanis--Landel type and polyconvex. Its scalar summand is
term-wise inversion-symmetric precisely when $\alpha=\beta$.

The purpose of this note is to exploit the identity
$\lambda+\lambda^{-1}=2\cosh(|\!\log\lambda|)$ on the incompressibility
constraint ${\rm SL}(3)$. It gives an explicit polyconvex extension for the family of incompressible isotropic energies
\begin{align}\label{eq:abstract-energy}
W_\psi(F)=\sum_{i=1}^3\psi\mleft(|\!\log\lambda_i|\mright)\qquad\text{with}\qquad\psi\colon[0,\infty)\mapsto\R,
\end{align} a direct verification along admissible rank-one
lines, and a one-dimensional differential sufficient condition.
For $F\in\GLp(3)$, let
\begin{equation}\label{eq:polar}
  F=R\,U,\qquad R\in\SO(3),\qquad\text{and}\qquad
  U=\sqrt{F^TF}\in\operatorname{Sym}^{++}(3)
\end{equation}
be the right polar decomposition (see \cite{agn_neff2014grioli}). The eigenvalues of $U$ are the singular
values of $F$.

We shall use the following classical result in precisely the form needed below.

\begin{theorem}[Ball's singular-value convexity theorem]\label{thm:ball}
Let $G\colon[0,\infty)^3\mapsto\R$ be symmetric, convex, and non-decreasing in
each variable. Then
$
  X\mapsto G\mleft(\nu_1(X),\nu_2(X),\nu_3(X)\mright)
$
is convex on $\R^{3\times3}$.
\end{theorem}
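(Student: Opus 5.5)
The plan is to deduce this classical result from two ingredients. The first is the characterization of singular values through the von Neumann trace inequality, $\sum_i \nu_i(X)\nu_i(Y)=\max_{Q_1,Q_2\in\SO(3)}\langle Q_1 X Q_2, Y\rangle$ up to sign conventions. The second is the theory of symmetric gauge-type (Schur-convex) functions. I would first reduce to the statement that, for a symmetric convex non-decreasing $G$, the composite $\Phi(X):=G(\nu(X))$ satisfies $\Phi(tX+(1-t)Y)\le t\Phi(X)+(1-t)\Phi(Y)$. The key intermediate fact is the weak majorization
\[
\nu(tX+(1-t)Y)\prec_w t\,\nu(X)+(1-t)\,\nu(Y),
\]
where both vectors are ordered decreasingly.

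The first step proves this majorization via Ky Fan's maximum principle. For each $k$, the partial sum $\sum_{i\le k}\nu_i(X)=\max\{\operatorname{tr}(P^TXQ): P,Q\in\R^{3\times k},\ P^TP=Q^TQ=\id_k\}$. It is therefore a supremum of linear functionals, hence convex and subadditive in $X$. This gives $\sum_{i\le k}\nu_i(tX+(1-t)Y)\le t\sum_{i\le k}\nu_i(X)+(1-t)\sum_{i\le k}\nu_i(Y)$ for $k=1,2,3$, which is exactly the weak majorization.

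The second step shows that a symmetric, convex, non-decreasing $G$ on $[0,\infty)^3$ is monotone with respect to weak majorization: $a\prec_w b$ with $a,b\ge0$ implies $G(a)\le G(b)$. The standard route is to find $c$ with $a\le c$ componentwise and $c\prec b$ (ordinary majorization). Then $c$ is a convex combination of permutations of $b$ by Hardy--Littlewood--Pólya/Birkhoff. Symmetry together with convexity gives $G(c)\le G(b)$, and monotonicity gives $G(a)\le G(c)$. One must check that $c$ can be chosen nonnegative, which holds because $c$ only increases entries of $a$, for instance by raising the smallest entry of $a$ until the total sums agree.

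Combining the two steps, applying monotone $G$ and then convexity of $G$ at the last stage yields
\[
\Phi(tX+(1-t)Y)\le G\bigl(t\nu(X)+(1-t)\nu(Y)\bigr)\le t\Phi(X)+(1-t)\Phi(Y).
\]
The main obstacle is the majorization lemma in the second step, specifically the construction of $c$ and the appeal to Birkhoff's theorem. In a short note I would cite it (Ball 1977, or Marshall--Olkin) rather than reprove it.
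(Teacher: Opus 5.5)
\textbf{Comparison with the paper.} The paper does not prove this statement. It quotes it as the sufficiency half of Ball's Theorem~5.1(ii) and uses it as a black box, so your proposal supplies an independent argument, and its overall structure is sound. Ky Fan's variational formula makes each partial sum $\sum_{i\le k}\nu_i(X)$ a supremum of linear functionals, hence convex in $X$. Since $t\,\nu(X)+(1-t)\,\nu(Y)$ is again decreasingly ordered, this gives exactly $\nu(tX+(1-t)Y)\prec_w t\,\nu(X)+(1-t)\,\nu(Y)$. Monotonicity of a symmetric, convex, non-decreasing $G$ under $\prec_w$, followed by one use of convexity of $G$, closes the chain.

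What each approach buys: your route is self-contained modulo Ky Fan and Birkhoff, uses nothing about $G$ beyond the stated hypotheses, and works verbatim in $\R^{n\times n}$. The citation buys brevity and also carries the converse (necessity of convexity and monotonicity of $G$), which the paper does not need.

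\textbf{A step that needs repair.} The construction you suggest for $c$ (raise the smallest entry of $a$ until the sums agree) fails in general. For $a=(1,0,0)\prec_w b=(1,1,1)$ it produces $c=(1,0,2)$. Its largest entry exceeds $b_1=1$, so $c\not\prec b$. For $G(x)=\sum_i x_i^2$ one even gets $G(c)=5>3=G(b)$, so the chain genuinely breaks.

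Use the water-filling vector instead. With $a,b$ decreasingly ordered, put $c_i=\max(a_i,\tau)$, where $\tau\ge a_3$ is chosen by continuity so that $\sum_i c_i=\sum_i b_i$. Then:
\begin{itemize}
\item $c$ is still decreasingly ordered, with $c_i=a_i$ for $i<m$ and $c_i=\tau$ for $i\ge m$, for some $m\le 3$.
\item The defect $D(k)=\sum_{i\le k}(b_i-c_i)$ satisfies $D(k)\ge 0$ for $k<m$ by weak majorization, and $D(3)=0$.
\item For $k\ge m$ its increments are $b_k-\tau$, which are non-increasing. So $D$ is concave on $\{m-1,\dots,3\}$ with non-negative endpoint values, hence non-negative throughout.
\end{itemize}
Hence $a\le c\prec b$, with $c\ge a\ge 0$ automatically, and the rest of your argument goes through unchanged. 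Alternatively, simply cite Marshall--Olkin, Prop.~5.A.9, as you intended.
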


This is the sufficiency part of \cite[Theorem~5.1(ii)]{Ball1976}. In particular,
if $\bar g\colon[0,\infty)\mapsto\R$ is convex and non-decreasing, then
\begin{equation}\label{eq:G}
  G(X)=\sum_{i=1}^3\bar g\mleft(\nu_i(X)\mright)
\end{equation}
is convex on $\R^{3\times3}$.

\begin{definition}\label{def:r1}
A function $W\colon\SL(3)\mapsto\R$ is rank-one convex on $\SL(3)$ \ {if the function $t\mapsto W(F_t)$ is convex for all $F_t=F+t\,a\otimes b\in\SL(3)$, where $a,b\in\R^3$,
$F\in\SL(3)$, and $t \in [0,1]$.}
\end{definition}

For a rank-one perturbation, the matrix determinant lemma gives
\begin{equation}\label{eq:detlemma}
  \det(F+t\,a\otimes b)
  =\det F\mleft(1+t\,\langle b, F^{-1}a\rangle \mright).
\end{equation}
Thus, for $F\in\SL(3)$, the entire line $F+t\,a\otimes b$ lies in $\SL(3)$
if and only if
$
 \langle b, F^{-1}a\rangle=0.
$
Since $\SL(3)$ is not convex, rank-one convexity is understood only along
rank-one segments contained in the constraint manifold.

\section{The polyconvex extension}\setcounter{equation}{0}
\subsection{The general case}
\begin{lemma}\label{lem:key}
Let $F\in\SL(3)$ and let $\lambda_i$ be its singular
values. Then, up to permutation,
\begin{equation}\label{eq:key}
  \nu_i(F+\Cof F)=\lambda_i+\lambda_i^{-1}\qquad\forall\, i\in\{1,2,3\}.
\end{equation}
\end{lemma}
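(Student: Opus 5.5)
We will prove \cref{lem:key} by reducing to the diagonal case through the singular value decomposition. For any $F\in\GLp(3)$ the cofactor matrix is given by $\Cof F=(\det F)\,F^{-T}$, so on $\SL(3)$ we have $\Cof F=F^{-T}$. We write $F=Q_1\,D\,Q_2^T$ with $Q_1,Q_2\in\SO(3)$ and $D=\operatorname{diag}(\lambda_1,\lambda_2,\lambda_3)$. Such a decomposition with both rotations proper exists because $\det F=1>0$; for instance, take the polar decomposition \eqref{eq:polar}, $F=RU$, and diagonalize $U=Q\,D\,Q^T$ with $Q\in\SO(3)$, which gives $Q_1=RQ$ and $Q_2=Q$.

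Then $F^{-T}=Q_1^{-T}D^{-1}Q_2^{-1}=Q_1\,D^{-1}Q_2^T$, since $Q_i^{-T}=Q_i$ for orthogonal matrices. Both summands share the same pair of orthogonal factors, so
\begin{equation*}
  F+\Cof F=Q_1\,(D+D^{-1})\,Q_2^T,\qquad D+D^{-1}=\operatorname{diag}\mleft(\lambda_1+\lambda_1^{-1},\lambda_2+\lambda_2^{-1},\lambda_3+\lambda_3^{-1}\mright).
\end{equation*}
The diagonal entries of $D+D^{-1}$ are positive. Singular values are invariant under left and right multiplication by orthogonal matrices, so the singular values of $F+\Cof F$ are exactly the numbers $\lambda_i+\lambda_i^{-1}$. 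We can confirm this directly: $(F+\Cof F)^T(F+\Cof F)=Q_2\,(D+D^{-1})^2\,Q_2^T$, and taking square roots of its eigenvalues gives these values, counted with multiplicity.

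The only point needing care is the ordering. The map $\lambda\mapsto\lambda+\lambda^{-1}$ is not monotone on $(0,\infty)$: it decreases on $(0,1]$ and increases on $[1,\infty)$. So if the $\nu_i$ are listed in decreasing order, the correspondence with the $\lambda_i$ holds only after a permutation. This is exactly why the statement says "up to permutation", and it causes no difficulty for symmetric functions such as $G$ in \eqref{eq:G}. There is no substantial obstacle beyond noting that $\Cof F=F^{-T}$ holds precisely because $\det F=1$.
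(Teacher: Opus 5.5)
Your proof is correct and follows essentially the same route as the paper. The paper writes $F=RU$, notes that $\Cof F=F^{-T}=R\,U^{-1}$ on $\SL(3)$, hence $F+\Cof F=R\,(U+U^{-1})$, and reads off the singular values from the symmetric positive definite factor $U+U^{-1}$; your decomposition $F=Q_1DQ_2^T$ simply diagonalizes $U$ one step further, and your remark on the ordering correctly explains the ``up to permutation'' clause.
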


\begin{proof}
For $F=R\,U\in\SL(3)$, one has
\begin{equation}\label{eq:cof-polar}
  \Cof F=(\det F)F^{-T}=F^{-T}=R\,U^{-1}.
\end{equation}
Consequently,
\begin{equation}\label{eq:Fcof-polar}
  F+\Cof F=R\,(U+U^{-1}).
\end{equation}
Left multiplication by $R\in\SO(3)$ does not change singular values. Since
$U+U^{-1}$ is symmetric positive definite and has the same eigenvectors as
$U$, its eigenvalues and, hence, its singular values are
$\lambda_i+\lambda_i^{-1}$. This proves \eqref{eq:key}.
\end{proof}

\begin{theorem}[Explicit \ {polyconvex} extension]\label{thm:main}
Let $\psi:[0,\infty)\mapsto\R$ and define
\begin{equation}\label{eq:gdef2}
  g(s)=\psi\mleft(\arcosh\frac{s}{2}\mright)\qquad\forall\,s\geq2.
\end{equation}
Assume that there exists a convex and non-decreasing function
$\bar g\colon[0,\infty)\mapsto\R$ such that $\bar g=g$ on $[2,\infty)$. Then,
\begin{equation}\label{eq:Wpsi}
  W_\psi(F)=\sum_{i=1}^3\psi\mleft(|\!\log\lambda_i|\mright)\qquad\forall\,F\in\SL(3)
\end{equation}
is the restriction to $\SL(3)$ of the polyconvex function\ {
\begin{equation}\label{eq:Wtilde}
  P(X,Y,d) = \sum_{i=1}^3\bar g\mleft(\nu_i(X+Y)\mright).
\end{equation}
In particular, $W_\psi$ is rank-one convex on $\SL(3)$.}
\end{theorem}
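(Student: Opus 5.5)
The proof has three parts: show that $P$ is convex in $(X,Y,d)$, show that it agrees with $W_\psi$ on $\SL(3)$, and deduce rank-one convexity.

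\textbf{Convexity of $P$.} The map $(X,Y,d)\mapsto X+Y$ is linear from $\R^{3\times3}\times\R^{3\times3}\times\R$ to $\R^{3\times3}$. By \cref{thm:ball}, in the form \eqref{eq:G}, the function $G(Z)=\sum_i\bar g(\nu_i(Z))$ is convex on $\R^{3\times3}$, because $\bar g$ is convex and non-decreasing. A convex function composed with a linear map is convex, so $P$ is convex. Hence $F\mapsto P(F,\Cof F,\det F)$ is polyconvex in Ball's sense on all of $\GLp(3)$. The fact that $P$ does not depend on $d$ causes no difficulty.

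\textbf{Restriction to $\SL(3)$.} Fix $F\in\SL(3)$. By \cref{lem:key}, up to permutation, $\nu_i(F+\Cof F)=\lambda_i+\lambda_i^{-1}$. Each of these values is at least $2$ by AM--GM, so $\bar g=g$ there. Write $x_i=|\log\lambda_i|\ge0$. Then $\lambda_i+\lambda_i^{-1}=2\cosh(\log\lambda_i)=2\cosh x_i$, and hence $\arcosh\frac{\lambda_i+\lambda_i^{-1}}{2}=x_i$, using that $\arcosh$ is the inverse of $\cosh$ on $[0,\infty)$. It follows that
$$P(F,\Cof F,\det F)=\sum_i g(2\cosh x_i)=\sum_i\psi(x_i)=W_\psi(F).$$
The sum is symmetric, so the permutation ambiguity in \cref{lem:key} does not matter.

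\textbf{Rank-one convexity.} Take a rank-one segment $F_t=F+t\,a\otimes b\subset\SL(3)$. The map $t\mapsto \Cof F_t$ is affine in $t$ for rank-one perturbations, since $\Cof(F+t\,a\otimes b)=\Cof F+t\,\Cof F\,(\ldots)$ is linear in $t$ by the standard minors expansion. Likewise $\det F_t\equiv1$, and $F_t$ itself is affine in $t$. So $t\mapsto(F_t,\Cof F_t,\det F_t)$ is affine. Composing the convex function $P$ with this affine map gives a convex function of $t$, and on the segment it equals $W_\psi(F_t)$ by the previous step. This is exactly the condition in \cref{def:r1}.

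The only step that needs care is the affinity of $t\mapsto\Cof(F+t\,a\otimes b)$. The cleanest justification is the classical fact that all $2\times2$ minors of $F+t\,a\otimes b$ are affine in $t$: each $2\times2$ minor of a rank-one matrix vanishes, so the $t^2$ term drops out. The AM--GM bound and the branch of $\arcosh$ are routine.
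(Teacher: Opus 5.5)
Your proof is correct and follows the paper's argument. Convexity of $P$ comes from \cref{thm:ball} composed with the linear map $(X,Y,d)\mapsto X+Y$, agreement with $W_\psi$ on $\SL(3)$ comes from \cref{lem:key} together with $\lambda_i+\lambda_i^{-1}=2\cosh|\log\lambda_i|\geq 2$, and rank-one convexity follows from polyconvexity. The only difference is that you spell out this last step via the affinity in $t$ of the $2\times 2$ minors of $F+t\,a\otimes b$, where the paper simply cites that polyconvexity implies rank-one convexity (and separately checks that $F_t+\Cof F_t$ is affine via Sherman--Morrison in \cref{prop:direct}).
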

\begin{figure}[ht]
\centering

\begin{subfigure}[t]{0.48\textwidth}
\centering
\begin{tikzpicture}
\begin{axis}[
  width=\textwidth,height=7.2cm,
  domain=0:2.1,samples=120,
  axis lines=middle,
  xmin=0,xmax=2.2,ymin=1.8,ymax=8.5,
  xlabel={$x$},ylabel={$s=2\cosh x$},
  tick label style={black},
  label style={black},
  axis line style={black}]
\addplot[black,thick] {exp(x)+exp(-x)};
\end{axis}
\end{tikzpicture}
\caption{The change of variables $s=2\cosh x$.}
\label{fig:cosh-map}
\end{subfigure}
\hfill
\begin{subfigure}[t]{0.48\textwidth}
\centering
\begin{tikzpicture}
\begin{axis}[
  width=\textwidth,height=7.2cm,
  domain=2:8,samples=120,
  axis lines=middle,
  xmin=1.8,xmax=8.2,ymin=0,ymax=2.2,
  xlabel={$s$},ylabel={$x=\arcosh(s/2)$},
  tick label style={black},
  label style={black},
  axis line style={black}]
\addplot[black,thick] {ln(x/2+sqrt((x/2)^2-1))};
\end{axis}
\end{tikzpicture}
\caption{The inverse change of variables $x=\arcosh(s/2)$.}
\label{fig:arcosh-map}
\end{subfigure}

\caption{The mutually inverse change of variables
$s=2\cosh x$ and $x=\arcosh(s/2)$ between $[0,\infty)$ and $[2,\infty)$.}
\label{fig:cosh-arcosh}
\end{figure}
\begin{proof}
Recall the function $G$ defined in \eqref{eq:G}, i.e.,  $G(X)=\sum_i \bar g\mleft(\nu_i(X)\mright)$.  By \cref{thm:ball},
$G$ is convex. Define
\begin{equation}\label{eq:P}
  P\colon\R^{3\times3}\times\R^{3\times3}\times\ {(0,\infty)}\mapsto\R
  \qquad\text{such that}\qquad P(X,Y,d)=G(X+Y).
\end{equation}
The map $(X,Y,d)\mapsto X+Y$ is affine, so $P$ is convex. \ {Moreover, for all $F \in \SL(3)$, we may take
\begin{equation}
\label{eq:W-P}
\begin{split}
  W_\psi(F) = P(F,\Cof F,\det F) &= G(F + \Cof F) \\
  &= \sum_{i=1}^3\bar g\mleft(\nu_i(F+\Cof F)\mright) \\
  &= \sum_{i=1}^3 g\mleft(\nu_i(F+\Cof F)\mright) \\
  &= \sum_{i=1}^3\psi\mleft(\arcosh\frac{\lambda_i+\lambda_i^{-1}}{2}\mright) \\
  &=\sum_{i=1}^3\psi\mleft(|\!\log\lambda_i|\mright),
\end{split}
\end{equation}
since, by \cref{lem:key},
\begin{equation}\label{eq:sigma-cosh}
  \nu_i(F+\Cof F)=\lambda_i+\lambda_i^{-1}
  =2\cosh(|\!\log\lambda_i|)\geq2
\end{equation}
and $\bar g=g$ on $[2,\infty)$. Thus, $W_\psi$ is polyconvex. Finally, a polyconvex function is rank-one convex; hence, its restriction is
convex along every rank-one segment contained in $\SL(3)$.}
\end{proof}

On ${\rm SL}(3)$, the representation \eqref{eq:Wtilde} depends only on the reciprocal-invariant quantities \(\lambda_i+\lambda_i^{-1}\) and, therefore, naturally yields an inversion-symmetric representative of the scalar generator. As shown in Appendix~A, term-wise inversion symmetry of a prescribed generator is not necessary, since a logarithmic null term \(c\log\lambda\) is invisible on \(\SL(3)\).
The rank-one convexity conclusion also follows directly from the same convex
spectral representation.

\begin{proposition}[Direct proof on admissible rank-one lines]\label{prop:direct}
Under the assumptions of \cref{thm:main}, let
$F_t=F+t\,a\otimes b$ with $F\in\SL(3)$ and
 {$\langle b, F^{-1}a\rangle=0$}. Then $t\mapsto W_\psi(F_t)$ is convex \ {over the unit interval}.
\end{proposition}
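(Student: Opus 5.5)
The plan is to write $W_\psi$ along the admissible line as a convex function composed with an affine matrix path, without going through the abstract polyconvexity statement. Since $\langle b,F^{-1}a\rangle=0$, the matrix determinant lemma \eqref{eq:detlemma} gives $\det F_t=1$ for every $t$, so $F_t\in\SL(3)$ and $\Cof F_t=F_t^{-T}$ on the whole line. The key step is to show that $t\mapsto F_t+\Cof F_t$ is affine in $t$. This is the one genuinely computational step, although it is short.

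To do this I will use the cofactor of a rank-one perturbation. In general $\Cof(F+t\,a\otimes b)=\Cof F+t\,M+t^2 N$, where $N=\Cof(a\otimes b)=0$ because $a\otimes b$ has rank one. Hence $\Cof F_t$ is affine in $t$. Concretely, by Sherman--Morrison with $\langle b,F^{-1}a\rangle=0$ we get
\begin{equation*}
F_t^{-1}=F^{-1}-t\,F^{-1}a\otimes F^{-T}b,
\end{equation*}
so that
\begin{equation*}
\Cof F_t=F_t^{-T}=F^{-T}-t\,F^{-T}b\otimes F^{-1}a .
\end{equation*}
Therefore $F_t+\Cof F_t=(F+\Cof F)+t\,H$, with $H=a\otimes b-F^{-T}b\otimes F^{-1}a$ a fixed matrix.

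Next, by \cref{lem:key} and \eqref{eq:sigma-cosh}, exactly as in \eqref{eq:W-P}, we have $W_\psi(F_t)=G(F_t+\Cof F_t)$ for every $t$. Here $G(X)=\sum_i\bar g(\nu_i(X))$ is convex on $\R^{3\times3}$ by \cref{thm:ball}, and $\bar g=g$ on $[2,\infty)$, which contains all singular values $\lambda_i+\lambda_i^{-1}$ of $F_t+\Cof F_t$. So $t\mapsto W_\psi(F_t)=G\bigl((F+\Cof F)+tH\bigr)$ is a convex function composed with an affine map, hence convex on $[0,1]$ (indeed on all of $\R$).

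The only real obstacle is justifying that the cofactor is affine along the line. I would check it either through the Sherman--Morrison formula above, verifying $F_tF_t^{-1}=\id_3$ using $\langle b,F^{-1}a\rangle=0$, or through the quadratic expansion of $\Cof$ together with $\Cof(a\otimes b)=0$. The remainder is just bookkeeping of the identities already established in \cref{lem:key} and the proof of \cref{thm:main}.
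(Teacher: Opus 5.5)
Your proposal is correct and follows the paper's proof essentially step by step: the determinant lemma keeps $F_t$ in $\SL(3)$, and Sherman--Morrison makes $F_t+\Cof F_t=(F+F^{-T})+t\,\bigl[a\otimes b-(F^{-T}b)\otimes(F^{-1}a)\bigr]$ affine in $t$. Convexity then follows from the convexity of $G$ (\cref{thm:ball}) together with the identity $W_\psi(F_t)=G(F_t+\Cof F_t)$ from \cref{thm:main}; your alternative justification via $\Cof(a\otimes b)=0$ is also valid but not needed.
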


\begin{proof}
By \eqref{eq:detlemma}, $\det F_t=1$ for every $t$. The
Sherman--Morrison formula \cite{ShermanMorrison1950} gives
\begin{equation}\label{eq:SM}
  F_t^{-1}
  =F^{-1}-t\,(F^{-1}a)\otimes(F^{-T}b).
\end{equation}
After transposition,
\begin{equation}\label{eq:cofFt}
  \Cof F_t=F_t^{-T}
  =F^{-T}-t\,(F^{-T}b)\otimes(F^{-1}a).
\end{equation}
Therefore,
\begin{align}\label{eq:affine-Fcof}
  F_t+\Cof F_t
  ={}&F+F^{-T}+t\Bigl[a\otimes b-(F^{-T}b)\otimes(F^{-1}a)\Bigr],
\end{align}
which is affine in $t$. Since $G$ is convex,
\begin{equation}\label{eq:G-line}
  t\mapsto G(F_t+\Cof F_t)
\end{equation}
is convex. As $F_t\in\SL(3)$, \cref{thm:main} gives
$G(F_t+\Cof F_t)=W_\psi(F_t)$, proving the assertion.
\end{proof}

\subsection{A differential sufficient condition}

\begin{proposition}\label{prop:differential}
Let $\psi\in C^2([0,\infty))$ satisfy
\begin{equation}\label{eq:origin}
  \psi^\prime(0)=0\qquad\text{and}\qquad\psi^\prime(x)\geq0\qquad\forall\,x>0,
\end{equation}
and
\begin{equation}\label{eq:diffcond}
  \psi^{\prime\prime}(x)\sinh x-\psi^\prime(x)\cosh x\geq0
\qquad\forall\,x>0,
\end{equation}
or equivalently,
\begin{equation}\label{eq:diffcond-coth}
  \psi^{\prime\prime}(x)\geq\psi^\prime(x)\coth x\qquad\forall\,x>0.
\end{equation}
Then, the function
\begin{equation}\label{eq:gdef}
	g(s)=\psi\mleft(\arcosh\frac{s}{2}\mright)\qquad\forall\,s\geq2.
\end{equation} is convex and non-decreasing on
$[2,\infty)$. Moreover,
\begin{equation}\label{eq:gbarext}
  \bar g(s)=
  \begin{cases}
    \psi(0)+\frac{1}{2}\psi^{\prime\prime}(0)(s-2),& \text{if }0\leq s\leq2,\\[1em]
    \psi\mleft(\arcosh\frac{s}{2}\mright),&\text{if }s\geq2,
  \end{cases}
\end{equation}
is a convex and non-decreasing extension of $g$. Consequently,
\cref{thm:main} applies.
\end{proposition}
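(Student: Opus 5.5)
The plan is to pass to the variable $x=\arcosh(s/2)$, so that $s=2\cosh x$, and to compute the derivatives of $g$ by the chain rule. On $(2,\infty)$ we have $ds/dx=2\sinh x>0$, which gives
\begin{equation*}
  g'(s)=\frac{\psi'(x)}{2\sinh x},\qquad
  g''(s)=\frac{1}{2\sinh x}\,\frac{d}{dx}\Bigl(\frac{\psi'(x)}{2\sinh x}\Bigr)
  =\frac{\psi''(x)\sinh x-\psi'(x)\cosh x}{4\sinh^3 x}.
\end{equation*}
From these formulas:
\begin{itemize}
\item The sign condition $\psi'\ge0$ in \eqref{eq:origin} gives $g'\ge0$ on $(2,\infty)$.
\item Condition \eqref{eq:diffcond} gives $g''\ge0$ there.
\item The equivalence of \eqref{eq:diffcond} and \eqref{eq:diffcond-coth} is immediate, since $\sinh x>0$ for $x>0$.
\end{itemize}
Since $g$ is continuous on $[2,\infty)$ and $C^2$ on the open interval, it is convex and non-decreasing on the closed interval.

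The main obstacle is the endpoint $s=2$, where $\sinh x$ vanishes. I will show that $g'(2^+)=\lim_{x\to0^+}\psi'(x)/(2\sinh x)$ exists and equals $\tfrac12\psi''(0)$. This follows from L'Hôpital's rule, or directly from $\psi'(x)=\psi''(0)x+o(x)$, using $\psi'(0)=0$ and $\psi\in C^2$.

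Next I will check that $\psi''(0)\ge0$. This holds because $\psi'(x)\ge0=\psi'(0)$ for $x>0$, so $\psi''(0)=\lim_{x\to0^+}\psi'(x)/x\ge0$.

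Consequently $g$ extends to a $C^1$ function on $[2,\infty)$ with one-sided derivative $\tfrac12\psi''(0)$ at $s=2$. Since $g''\ge0$, the derivative $g'$ is non-decreasing on $[2,\infty)$, so in particular $g'(s)\ge\tfrac12\psi''(0)$ for all $s\ge2$.

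For the extension \eqref{eq:gbarext}, the affine piece on $[0,2]$ has value $\psi(0)=g(2)$ at $s=2$ and slope $\tfrac12\psi''(0)$. So $\bar g$ is continuous, and its derivative (one-sided at $s=2$) is globally non-decreasing: it is constant equal to $\tfrac12\psi''(0)$ on $[0,2]$, matches at $s=2$, and is non-decreasing afterwards. A continuous, piecewise $C^1$ function with non-decreasing derivative is convex; equivalently, $\bar g$ is the maximum of its tangent lines. Monotonicity of $\bar g$ follows from $\bar g'\ge\tfrac12\psi''(0)\ge0$.

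Finally, $\bar g$ is a convex, non-decreasing extension of $g$ from $[2,\infty)$ to $[0,\infty)$, which is exactly the hypothesis of \cref{thm:main}, so that theorem applies.
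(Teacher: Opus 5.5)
Your proposal is correct and matches the paper's proof essentially step for step: the substitution $s=2\cosh x$ with the same formulas for $g'$ and $g''$, the limit $g'(2^+)=\tfrac12\psi''(0)$, the observation that $\psi''(0)\ge0$ because $\psi'$ attains its minimum at $0$, and gluing the affine branch via monotonicity of the derivative across $s=2$. Your write-up is slightly more explicit than the paper's, for example in stating the bound $g'\ge\tfrac12\psi''(0)$ on $(2,\infty)$.
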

\begin{proof}
For $s>2$, set
$
  s=2\cosh x,$ $ x=\arcosh\frac{s}{2}>0.
$
Since $ds/dx=2\sinh x$, we have
$
  g^\prime(s)=\frac{\psi^\prime(x)}{2\sinh x}
$
and
\begin{equation}\label{eq:gsecond}
  g^{\prime\prime}(s)
  =\frac{\psi^{\prime\prime}(x)\sinh x-\psi^\prime(x)\cosh x}
         {4\sinh^3x}.
\end{equation}
Thus,  $g$ is non-decreasing
and convex on $(2,\infty)$.
Because $\psi^\prime(0)=0$ and $\psi\in C^2([0,\infty))$,
\begin{equation}\label{eq:gprime-limit}
  \lim_{s\to2}g^\prime(s)
  =\lim_{x\to0}\frac{\psi^\prime(x)}{2\sinh x}
  =\frac{\psi^{\prime\prime}(0)}{2}.
\end{equation}
Furthermore,  we have that 
$\psi^{\prime\prime}(0)\geq0$; indeed, $\psi^\prime$ has a minimum at the left endpoint $0$.
Hence the affine branch in \eqref{eq:gbarext} is non-decreasing. It agrees with
$g$ in both value and right derivative at $s=2$. Since $g^\prime$ is non-decreasing
on $(2,\infty)$, the derivative of the piecewise function \eqref{eq:gbarext}
is non-decreasing across the junction. Therefore $\bar g$ is convex and
non-decreasing on $[0,\infty)$.
\end{proof}
In the equality case, $\psi^{\prime\prime}(x)=\psi^\prime(x)\coth x$ in \eqref{eq:diffcond-coth}, one has
$(\log|\psi^\prime(x)|)^\prime=\coth x$, wherever $\psi^\prime(x)\neq0$. Hence
$\psi^\prime(x)=C_1\sinh x$ and, after a second integration,
$
  \psi(x)=C_1\cosh x+C_2,
$
where the condition $\psi^\prime(0)=0$ is automatically satisfied.
\begin{remark}[Ellipticity and Hill monotonicity]
\label{rem:ellipticity-hill}
The conditions of Proposition \ref{prop:differential} provide a one-dimensional
differential sufficient condition for rank-one convexity and, hence, for Legendre--Hadamard ellipticity along admissible rank-one directions
in $\SL(3)$. Moreover, since
$
  \psi^{\prime\prime}(x)\geq \psi^\prime(x)\coth x \geq 0
$
for all $x > 0$, the function $\psi$ is convex on $[0,\infty)$. Together with
$\psi^\prime(0)=0$, this implies that its even extension
$
  \vartheta\colon\R\mapsto\R,
  \ 
  \vartheta(x)\coloneq\psi(|x|),
$
is convex on $\R$. Indeed, for $x\neq0$,
$
  \vartheta^\prime(x)
  =
  \psi^\prime(|x|)\frac{d}{dx}|x|
  =
  \sgn(x)\psi^\prime(|x|),
$
while $\psi^\prime(0)=0$ yields $\vartheta^\prime(0)=0$. Thus, with the convention
$\sgn(0)=0$, one may write
\begin{equation}\label{eq:vartheta-prime}
  \vartheta^\prime(x)=\sgn(x)\psi^\prime(|x|)
  \qquad\forall\,x\in\R.
\end{equation}
The sign factor is therefore essential: it is precisely the derivative of
the absolute value entering the even extension $\vartheta(x)=\psi(|x|)$.
Consider now the logarithmic principal stretches
$
  x_i=\log\lambda_i .
$
The incompressibility constraint $\lambda_1\lambda_2\lambda_3=1$ is
equivalent to
$
  x_1+x_2+x_3=0.
$
Hence the corresponding reduced logarithmic energy is
\begin{equation}\label{eq:log-reduced}
  \widehat W(x_1,x_2,x_3)
  =\sum_{i=1}^3\vartheta(x_i)\qquad\forall\,\sum_{i=1}^3 x_i = 0.
\end{equation}
Equivalently, in terms of two independent logarithmic variables,
\begin{equation}\label{eq:log-reduced-two-variables}
  \widehat W_{\mathrm{red}}^{\mathrm{inc}}(x_1,x_2)
  \coloneq\widehat W(x_1,x_2,-x_1-x_2)
  =\vartheta(x_1)+\vartheta(x_2)+\vartheta(-x_1-x_2).
\end{equation}
Since $\vartheta$ is convex, the function
$(x_1,x_2,x_3)\mapsto\sum_i \vartheta(x_i)$ is convex on $\R^3$, and
therefore its restriction to the trace-free plane
$
  \mathcal H
  \coloneq
  \mleft\{x\in\R^3\,|\,\sum_i x_i=0\mright\}
$
is convex as well.

For the unconstrained logarithmic variables, the principal components of
the corresponding Kirchhoff stress are given by
\[
  \tau_i
  =
  \frac{\partial\widehat W}{\partial x_i}
  =
  \vartheta^\prime(x_i)
  =
  \sgn(x_i)\psi^\prime(|x_i|).
\]

In the incompressible setting the Kirchhoff stress is determined only up to
an arbitrary hydrostatic contribution, i.e., up to addition of the same
scalar to all three principal components. This ambiguity is immaterial on
$\mathcal H$, since for $x,y\in\mathcal H$ one has
$\sum_i (x_i-y_i)=0$. Thus, the monotonicity of the gradient of the convex
energy restricted to $\mathcal H$ can be written directly as
\begin{equation}\label{eq:hill-principal}
  \sum_{i=1}^3
  \bigl[
    \sgn(x_i)\psi^\prime(|x_i|)
    -
    \sgn(y_i)\psi^\prime(|y_i|)
  \bigr](x_i-y_i)
  \geq0,
\end{equation}
for all $x,y\in\R^3$ satisfying
$\sum_i x_i=\sum_i y_i=0$.

In other words, \eqref{eq:hill-principal} is simply the standard monotonicity
inequality
\[
  \bigl\langle{\rm D}\widehat W(x)-{\rm D}\widehat W(y),x-y\bigr\rangle
  \geq0
\]
restricted to the trace-free logarithmic-strain plane, and corresponds precisely to the convexity of $  \widehat W_{\mathrm{red}}^{\mathrm{inc}}$. It is the weak form of Hill's
inequality in the principal logarithmic strains.
For related logarithmic stress--strain considerations and for the
corresponding implication in the incompressible planar setting, see
\cite{NBecker,GhibaNeffWollner}.
\end{remark}

For $\psi(x)=h(x^2)$, \eqref{eq:diffcond} takes the following form.

\begin{corollary}\label{cor:h}
Let $h\in C^2([0,\infty))$ satisfy $h'(r)\geq0$ for all $r\geq0$ and
\begin{equation}\label{eq:hcond}
  \bigl[h^\prime(x^2)+2x^2h^{\prime\prime}(x^2)\bigr]\tanh x
  \geq x\,h^\prime(x^2),
  \qquad x>0.
\end{equation}
Then the function on $\SL(3)$ given by
\begin{equation}\label{eq:h-energy}
  F\mapsto\sum_{i=1}^3h\mleft(\log^2\!\lambda_i\mright)
\end{equation}
admits the polyconvex extension supplied by \cref{thm:main}.
\end{corollary}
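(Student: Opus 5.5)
The plan is to reduce \cref{cor:h} to \cref{prop:differential} with the choice $\psi(x)=h(x^2)$ for $x\geq0$, and then invoke \cref{thm:main}. Since $|\log\lambda_i|^2=\log^2\!\lambda_i$, we have $\psi(|\log\lambda_i|)=h(\log^2\!\lambda_i)$. Hence the energy \eqref{eq:h-energy} is exactly $W_\psi$ from \eqref{eq:Wpsi}. It therefore suffices to check the hypotheses \eqref{eq:origin} and \eqref{eq:diffcond}.

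First, regularity. As $h\in C^2([0,\infty))$ and $x\mapsto x^2$ is smooth, $\psi\in C^2([0,\infty))$. By the chain rule,
\begin{equation*}
  \psi^\prime(x)=2x\,h^\prime(x^2),\qquad
  \psi^{\prime\prime}(x)=2h^\prime(x^2)+4x^2h^{\prime\prime}(x^2).
\end{equation*}
This gives $\psi^\prime(0)=0$ immediately. Moreover, $h^\prime\geq0$ yields $\psi^\prime(x)\geq0$ for $x>0$, so \eqref{eq:origin} holds.

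Second, the differential condition. Substituting into \eqref{eq:diffcond} gives
\begin{equation*}
  2\bigl[h^\prime(x^2)+2x^2h^{\prime\prime}(x^2)\bigr]\sinh x-2x\,h^\prime(x^2)\cosh x\geq0 .
\end{equation*}
Dividing by $2\cosh x>0$, this is equivalent to \eqref{eq:hcond}. \cref{prop:differential} then yields that $g(s)=h\bigl(\arcosh^2\frac{s}{2}\bigr)$ has the convex, non-decreasing extension \eqref{eq:gbarext}. Finally, \cref{thm:main} supplies the polyconvex function \eqref{eq:Wtilde} whose restriction to $\SL(3)$ is \eqref{eq:h-energy}.

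No step is a genuine obstacle; the only point needing care is the bookkeeping in the second step. The factor of $2$ must be divided out consistently, and one should note that the hyperbolic factor is rewritten via $\tanh x=\sinh x/\cosh x$. Because $\cosh x>0$, this rewriting preserves the direction of the inequality.
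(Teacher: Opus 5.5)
Your proposal is correct and matches the paper's proof: set $\psi(x)=h(x^2)$, compute $\psi'$ and $\psi''$, read off \eqref{eq:origin} from $h'\geq0$, check that \eqref{eq:diffcond} divided by $2\cosh x$ is exactly \eqref{eq:hcond}, and then apply \cref{prop:differential} and \cref{thm:main}. Your explicit observation that $\psi(|\log\lambda_i|)=h(\log^2\lambda_i)$, which identifies the energy with $W_\psi$, is a small step the paper leaves implicit.
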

\begin{proof}
For $\psi(x)=h(x^2)$,
$
  \psi^\prime(x)=2x\,h^\prime(x^2),$ $
  \psi^{\prime\prime}(x)=2h^\prime(x^2)+4x^2h^{\prime\prime}(x^2).
$
The condition $\psi^{\prime\prime}(x)\tanh x\geq\psi^\prime(x)$ is exactly
\eqref{eq:hcond}, while $\psi^\prime(0)=0$ and $\psi^\prime\geq0$ follow from
$h^\prime\geq0$. Apply \cref{prop:differential,thm:main}.
\end{proof}
\section{An example and a counterexample}
\subsection{The exponential inversion-symmetric Valanis--Landel type energy}\setcounter{equation}{0}
\begin{proposition}\label{thm:exp}
Let $\mu>0$ and $k\geq\frac16$. The energy
\begin{equation}\label{eq:Wexp}
  W_{\exp,k}(F)=\frac{\mu}{k}\sum_{i=1}^3
  e^{k\log^2\!\lambda_i}\qquad\forall\,F\in\SL(3),
\end{equation}
is the restriction to $\SL(3)$ of the polyconvex function
\ {\begin{equation}\label{eq:Wexptilde}
  P(X,Y,d)
  =\sum_{i=1}^3\bar g_k\mleft(\nu_i(X+Y)\mright),
\end{equation}}
where
\begin{equation}\label{eq:gexpbar}
  \bar g_k(s)=
  \begin{cases}
    \displaystyle \frac{\mu}{k}+\mu(s-2),
      &\text{if }0\leq s\leq2,\\[1em]
    \displaystyle \frac{\mu}{k}
    \exp\mleft(k\,\arcosh^2\!\mleft(\frac{s}{2}\mright)\mright),
      &\text{if }s\geq2.
  \end{cases}
\end{equation}
Consequently, $W_{\exp,k}$ is rank-one convex on $\SL(3)$.
\end{proposition}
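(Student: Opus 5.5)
The plan is to apply \cref{cor:h} with $h(r)=\frac{\mu}{k}e^{kr}$, so that $\psi(x)=h(x^2)=\frac{\mu}{k}e^{kx^2}$ and $W_{\exp,k}=\sum_i h(\log^2\lambda_i)$. We have $h'(r)=\mu e^{kr}>0$ and $h''(r)=\mu k e^{kr}$. Substituting these into \eqref{eq:hcond} and dividing by the positive factor $\mu e^{kx^2}$ reduces the hypothesis to the scalar inequality
\begin{equation*}
  (1+2kx^2)\tanh x\geq x\qquad\forall\,x>0 .
\end{equation*}
Since the left-hand side is non-decreasing in $k$ (because $\tanh x>0$), it suffices to treat the borderline value $k=\frac16$. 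There the inequality reads $(3+x^2)\tanh x\geq 3x$, a Padé-type lower bound for $\tanh$.

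The only real step is verifying this bound. I would multiply by $\cosh x>0$ and set
\begin{equation*}
  f(x)=(3+x^2)\sinh x-3x\cosh x .
\end{equation*}
Then $f(0)=0$. Differentiating, the $3\cosh x$ terms cancel and one obtains $f'(x)=x\,(x\cosh x-\sinh x)$. The auxiliary function $q(x)=x\cosh x-\sinh x$ satisfies $q(0)=0$ and $q'(x)=x\sinh x\geq0$, so $q\geq0$ on $[0,\infty)$. Hence $f'\geq0$ and therefore $f\geq0$ for $x>0$, which is the required inequality.

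A Taylor check confirms that $k=\frac16$ is the threshold. Indeed, $\tanh x-\frac{3x}{3+x^2}=(\frac{2}{15}-\frac19)x^5+\dots$ has a positive leading coefficient, whereas for $k<\frac16$ the $x^3$ coefficient of $(1+2kx^2)\tanh x-x$, namely $2k-\frac13$, is already negative. This explains the hypothesis $k\geq\frac16$.

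The remaining work is bookkeeping. \cref{prop:differential}, invoked through \cref{cor:h}, supplies the extension \eqref{eq:gbarext} with $\psi(0)=\mu/k$ and $\frac12\psi''(0)=h'(0)=\mu$. This is exactly the affine branch of \eqref{eq:gexpbar}. On $s\geq2$ we have $g(s)=\frac{\mu}{k}\exp\bigl(k\arcosh^2(s/2)\bigr)$, matching the second branch. \cref{thm:main} then yields both the polyconvex representation \eqref{eq:Wexptilde} and rank-one convexity on $\SL(3)$. I expect no obstacle beyond the elementary inequality for $f$; the one point needing care is to confirm that the monotonicity-in-$k$ reduction is legitimate, which holds because $\tanh x>0$ for $x>0$.
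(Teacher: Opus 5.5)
Your proposal is correct and follows essentially the paper's argument: your $f(x)=(3+x^2)\sinh x-3x\cosh x$ is exactly $3M(x)$ from the paper's proof, with the same derivative $x(x\cosh x-\sinh x)$, and your reduction to $k=\frac16$ by monotonicity in $k$ is equivalent to the paper's splitting $N_k=M+2\bigl(k-\frac16\bigr)x^2\sinh x$. The only cosmetic difference is that you enter through \cref{cor:h} in its $\tanh$ form rather than applying \cref{prop:differential} to $\psi_k$ directly; your Taylor remark correctly identifies $k=\frac16$ as the threshold of this sufficient condition, not of rank-one convexity itself.
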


\begin{proof}
Set $\psi_k(x)=\frac{\mu}{k}e^{kx^2}$ for $x\geq0$. Then
$\psi_k^\prime(x)=2\mu x e^{kx^2}\geq0$,
$\psi_k^{\prime\prime}(x)=2\mu(1+2kx^2)e^{kx^2}$,
$\psi_k^\prime(0)=0$, and $\psi_k^{\prime\prime}(0)=2\mu$. Moreover,
\begin{equation}\label{eq:exp-k-diff}
 \psi_k^{\prime\prime}(x)\sinh x-\psi_k^\prime(x)\cosh x
 =2\mu e^{kx^2}
 \bigl[(1+2kx^2)\sinh x-x\cosh x\bigr].
\end{equation}
Thus it remains to show that
\[
 N_k(x)\coloneq(1+2kx^2)\sinh x-x\cosh x\geq0\qquad\forall\,x>0.
\]
For $k\geq\frac16$, we write
\[
 N_k(x)=
 \mleft(\mleft(1+\frac{x^2}{3}\mright)\sinh x-x\cosh x\mright)
 +2\mleft(k-\frac16\mright)x^2\sinh x .
\]
Let
$M(x)=\mleft(1+\frac{x^2}{3}\mright)\sinh x-x\cosh x$.
Then $H(0)=0$ and
$M^\prime(x)=\frac{x}{3}\mleft(x\cosh x-\sinh x\mright)$. Since
$x\cosh x-\sinh x$ vanishes at $x=0$ and has derivative $x\sinh x>0$
for $x>0$, we obtain $M^\prime(x)>0$ and, hence, $M(x)>0$ for every $x>0$.
Consequently, $N_k(x)>0$ for $x>0$, whenever $k\geq\frac16$.

The hypotheses of \cref{prop:differential} are thus satisfied. In this case
\[
 g_k(s)=\frac{\mu}{k}
 \exp\mleft(k\,\arcosh^2\!\mleft(\frac{s}{2}\mright)\mright)\qquad\forall\,s\geq2.
\]
Since $\psi_k(0)=\frac{\mu}{k}$ and $\tfrac{1}{2}\psi_k^{\prime\prime}(0)=\mu$, formula
\eqref{eq:gbarext} gives
$\bar g_k(s)=\frac{\mu}{k}+\mu(s-2)$ on $[0,2]$, while on $[2,\infty)$
it coincides with $g_k$. Hence \eqref{eq:gexpbar} is a convex and
non-decreasing extension of $g_k$, and the conclusion follows from
\cref{thm:main}.
\end{proof}
\begin{figure}[ht]
\centering

\begin{subfigure}[t]{0.48\textwidth}
\centering
\begin{tikzpicture}
\begin{axis}[
  width=\textwidth,height=7.2cm,
  domain=0.1:10,samples=200,
  axis lines=middle,
  xmin=0.1,xmax=10,ymin=0,ymax=6,
  xlabel={$\lambda$},ylabel={scalar energy},
  tick label style={black},
  label style={black},
  axis line style={black}]
\addplot[black,thick] {(ln(x))^2};
\end{axis}
\end{tikzpicture}
\caption{The quadratic energy $\lambda\mapsto\log^2\!\lambda$.}
\label{fig:quadratic-energy}
\end{subfigure}
\hfill
\begin{subfigure}[t]{0.48\textwidth}
\centering
\begin{tikzpicture}
\begin{axis}[
  width=\textwidth,height=7.2cm,
  domain=0.1:10,samples=200,
  axis lines=middle,
  xmin=0.1,xmax=10,ymin=0,ymax=50,
  xlabel={$\lambda$},ylabel={scalar energy},
  tick label style={black},
  label style={black},
  axis line style={black}]
\addplot[black,thick] {exp((ln(x))^2)-1};
\end{axis}
\end{tikzpicture}
\caption{The exponential energy $\lambda\mapsto e^{\log^2\!\lambda}-1$.}
\label{fig:exponential-energy}
\end{subfigure}

\caption{Comparison of two inversion-symmetric scalar energies.}
\label{fig:scalar-energies}
\end{figure}

On \(\SL(3)\), up to an irrelevant additive constant, the exponentiated Hencky energy is
$ W_{\rm eH}(F)=\frac{\mu}{k} \exp\mleft(k\|\log V\|^2\mright) =\frac{\mu}{k} \exp\mleft(k\sum_i \log^2\lambda_i\mright), $
and is therefore different from the term-wise exponential energy considered here, i.e.,
$ W_{\exp,k}(F)=\frac{\mu}{k}\sum_i \exp\mleft(k\log^2\lambda_i\mright). $
The three-dimensional exponentiated Hencky energy is not globally rank-one convex \cite{NeffGhibaLankeit}.
In this sense, the energy considered here is another interesting generalisation of the quadratic Hencky energy. 

\subsection{Scalar convexity in \texorpdfstring{$\log\lambda$}{log lambda} alone is not sufficient}

\begin{proposition}\label{prop:hencky}
The quadratic Hencky energy \cite{agn_neff2015geometry}
\begin{equation}\label{eq:WH}
  W_H(F)=\mu\,\sum_{i=1}^3\log^2\!\lambda_i\qquad\forall\,F\in\SL(3),
\end{equation}
is not rank-one convex on $\SL(3)$.
\end{proposition}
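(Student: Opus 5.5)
The plan is to exhibit one explicit admissible rank-one line in $\SL(3)$ along which $t\mapsto W_H(F_t)$ fails to be convex. The natural candidate is simple shear,
\[
  F_t=\id_3+t\,e_1\otimes e_2 .
\]
Here $\langle e_2,e_1\rangle=0$, so by \eqref{eq:detlemma} the whole line lies in $\SL(3)$. The same holds when the line is based at any $F_{t_0}$: since $F_{t_0}^{-1}=\id_3-t_0\,e_1\otimes e_2$, we get $\langle e_2,F_{t_0}^{-1}e_1\rangle=0$. Hence every segment of this line is admissible in the sense of \cref{def:r1}, and it suffices to show that $t\mapsto W_H(F_t)$ has negative second derivative at some $t_0$. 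An affine reparametrisation then brings a small interval around $t_0$ into the unit interval.

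First, compute the singular values of $F_t$. The vector $e_3$ is a singular direction with $\lambda_3=1$. On the $e_1,e_2$ block, $F_t^TF_t$ has trace $2+t^2$ and determinant $1$, so the remaining singular values are $\lambda_{1,2}=\sqrt{1+t^2/4}\pm t/2$. These are reciprocal, and
\[
  \log\lambda_1=\arsinh\tfrac t2=-\log\lambda_2 .
\]
Therefore
\[
  W_H(F_t)=2\mu\,\arsinh^2\!\mleft(\tfrac t2\mright).
\]

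Next, differentiate $\phi(u)=\arsinh^2 u$ with $u=t/2$:
\[
  \phi''(u)=\frac{2}{(1+u^2)^{3/2}}\Bigl(\sqrt{1+u^2}-u\arsinh u\Bigr).
\]
For large $u$ one has $u\arsinh u\sim u\log(2u)$, which exceeds $\sqrt{1+u^2}\sim u$. So $\phi''$ becomes negative. Concretely, take $u=2$, i.e. $t_0=4$: then $\sqrt5\approx2.236$ while $2\arsinh 2\approx2.887$, so $\phi''(2)<0$.

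To finish, take the base point $F=\id_3+4\,e_1\otimes e_2$ and the line $F+s\,\varepsilon\,e_1\otimes e_2$ for small $\varepsilon>0$ and $s\in[0,1]$, or equivalently centre the segment at $t_0=4$. By continuity of $\phi''$, the function $s\mapsto W_H$ is strictly concave on this segment, so it is not convex. This contradicts rank-one convexity on $\SL(3)$.

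The only delicate point is the sign computation of $\phi''$ together with the bookkeeping that the chosen segment satisfies the $t\in[0,1]$ convention of \cref{def:r1}; the rescaling above handles the latter. I would also remark on why the general theory does not apply here. For $\psi(x)=\mu x^2$, condition \eqref{eq:diffcond} becomes $2\sinh x-2x\cosh x\geq0$, which fails for every $x>0$. This is consistent with the counterexample, even though $\psi$ is convex in $\log\lambda$.
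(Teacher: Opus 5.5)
Your proposal is correct and follows essentially the same route as the paper. Both use the simple-shear line $F_t=\id_3+t\,e_1\otimes e_2$, reduce to $W_H(F_t)=2\mu\arsinh^2(t/2)$, and show the second derivative is negative at a large shear: you test $t=4$, while the paper picks $t=2\sinh 2$ so the bracket becomes $1-2\tanh 2$. Your rescaling to the unit interval of \cref{def:r1} is a harmless addition that the paper leaves implicit.
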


\begin{proof}
Consider the simple-shear line
\begin{equation}\label{eq:shear-line}
  F_t=\id_3+t\,e_1\otimes e_2
  =\setlength\arraycolsep{0.5em}\begin{pmatrix}1&t&0\\0&1&0\\0&0&1\end{pmatrix}.
\end{equation}
It is a rank-one line contained in $\SL(3)$. For $t\geq0$, its singular values
are
$
  e^{\alpha(t)}, $ $e^{-\alpha(t)},$ and $1$ with
$
  \alpha(t)=\arsinh\frac{t}{2}.
$
Indeed, the two planar singular values have product one and difference $t$.
Therefore,
\begin{equation}\label{eq:WH-shear}
  W_H(F_t)=2\mu\arsinh^2\!\mleft(\frac{t}{2}\mright).
\end{equation}
For $t>0$,
\begin{equation}\label{eq:henckysecond}
  \frac{d^2}{dt^2}W_H(F_t)
  =\mu\,\frac{4}{t^2+4}
  \mleft(1-\frac{t}{\sqrt{t^2+4}}\arsinh\mleft(\frac{t}{2}\mright)\mright).
\end{equation}
Choose $t=2\sinh2$. Then
$
  \arsinh\frac{t}{2}=2,
$ $
  \frac{t}{\sqrt{t^2+4}}=\tanh2,
$
and the bracket in \eqref{eq:henckysecond} equals
$1-2\tanh2<0$. Thus the restriction of $W_H$ to this rank-one line is not
convex.
\end{proof}

The preceding proposition also shows that convexity and monotonicity of
$h\colon[0,\infty)\mapsto\R$ do not suffice for the rank-one convexity of
$\sum_i h(\log^2\!\lambda_i)$; the choice $h(r)=r$ is convex and
non-decreasing, but gives $W_H$. 

\begin{remark}[Scope of the criterion]
The condition in \cref{thm:main} is a sufficient condition for the explicit
extension \eqref{eq:Wtilde}. It is not asserted to be necessary for rank-one
convexity on $\SL(3)$. The quadratic Hencky energy in \cref{prop:hencky} also provides an
example showing that tension--compression (inversion) symmetry in the stretch
variable $\lambda$ alone is not sufficient for polyconvexity; the energy is
inversion-symmetric, but it is not even rank-one convex on $\SL(3)$.
\end{remark}

\section{Conclusion}

On $\SL(3)$, the identity
$
  F+\Cof F=R\,(U+U^{-1})
$
encodes the inversion symmetry of the principal stretches through
$
  \nu_i(F+\Cof F)=2\cosh(|\!\log\lambda_i|),
$
up to permutation. Consequently, every energy of the form
$\sum_i \psi(|\!\log\lambda_i|)$ for which
$s\mapsto\psi(\arcosh(\tfrac{s}{2}))$ has a convex non-decreasing extension to
$[0,\infty)$ admits an explicit polyconvex extension on $\GLp(3)$. The
criterion applies, e.g., to $\psi(x)=\frac{\mu}{k}e^{k\,x^2}$ and yields the rank-one convexity on
$\SL(3)$ of $\sum_i e^{\log^2\lambda_i}$. The simple-shear calculation for
the quadratic Hencky energy confirms that ordinary scalar convexity
 {in the logarithmic stretch variable} does not suffice.

\begin{footnotesize}
	\bibliographystyle{plain} 

\addcontentsline{toc}{section}{References}
 
\begin{thebibliography}{10}

\bibitem{Antman95}
S.~Antman.
\newblock {\em Nonlinear {P}roblems of {E}lasticity.}, volume 107 of {\em
  Applied {M}athematical {S}ciences}.
\newblock Springer, Berlin, 1995.

\bibitem{aubert1995necessary}
G.~Aubert.
\newblock Necessary and sufficient conditions for isotropic rank-one convex
  functions in dimension 2.
\newblock {\em Journal of Elasticity}, 39(1):31--46, 1995.

\bibitem{Ball1976}
J.~M. Ball.
\newblock Convexity conditions and existence theorems in nonlinear elasticity.
\newblock {\em Archive for Rational Mechanics and Analysis}, 63(4):337--403,
  1976.

\bibitem{GhibaNeffWollner}
I.D. Ghiba, M.P. Wollner, and P.~Neff.
\newblock Polyconvexity implies {Hill}'s inequality in {${\rm SL}(2)$}.
\newblock {\em European Journal of Mechanics-A/Solids}, 121:106296, 2026.

\bibitem{Kuczma2009}
M.~Kuczma.
\newblock {\em An Introduction to the Theory of Functional Equations and
  Inequalities: Cauchy's Equation and Jensen's Inequality}.
\newblock Birkh{\"a}user, Basel, 2 edition, 2009.

\bibitem{Landel1998}
R.~F. Landel.
\newblock A simple {$w'(\lambda)$} function for the {Valanis--Landel} form of
  stored energy function.
\newblock {\em Rubber Chemistry and Technology}, 71(2):234--243, 1998.

\bibitem{MartinEtAlBiot2025}
R.~J. Martin, I.-D. Ghiba, M.~K{\"o}hler, D.~Balzani, O.~Sander, and P.~Neff.
\newblock Quasiconvex relaxation of planar {Biot}-type energies and the role of
  determinant constraints.
\newblock {\em arXiv preprint arXiv:2501.10853}, 2025.

\bibitem{MartinGhibaNeff2019}
R.~J. Martin, I.-D. Ghiba, and P.~Neff.
\newblock A polyconvex extension of the logarithmic {Hencky} strain energy.
\newblock {\em Analysis and Applications}, 17(3):349--361, 2019.

\bibitem{mooney1940theory}
M.~Mooney.
\newblock A theory of large elastic deformation.
\newblock {\em Journal of Applied Physics}, 11(9):582--592, 1940.

\bibitem{agn_neff2015geometry}
P.~Neff, B.~Eidel, and R.~J. Martin.
\newblock Geometry of logarithmic strain measures in solid mechanics.
\newblock {\em Archive for Rational Mechanics and Analysis}, 222:507--572,
  2016.

\bibitem{NeffGhibaLankeit}
P.~Neff, I.~D. Ghiba, and J.~Lankeit.
\newblock The exponentiated {H}encky-logarithmic strain energy. {P}art {I}:
  {C}onstitutive issues and rank--one convexity.
\newblock {\em Journal of Elasticity}, 121:143--234, 2015.

\bibitem{agn_neff2014grioli}
P.~Neff, J.~Lankeit, and A.~Madeo.
\newblock On {Grioli}'s minimum property and its relation to {Cauchy}'s polar
  decomposition.
\newblock {\em International Journal of Engineering Science}, 80:209--217,
  2014.

\bibitem{NBecker}
P.~Neff, I.~M{\"u}nch, and R.J. Martin.
\newblock Rediscovering {G.~F. Becker}'s early axiomatic deduction of a
  multiaxial nonlinear stress--strain relation based on logarithmic strain.
\newblock {\em Mathematics and Mechanics of Solids}, 21(7):856--911, 2016.

\bibitem{Ogden83}
R.~W. Ogden.
\newblock {\em Non-{L}inear {E}lastic {D}eformations.}
\newblock Mathematics and its Applications. Ellis Horwood, Chichester, 1983.

\bibitem{PengLandel1972}
T.~J. Peng and R.~F. Landel.
\newblock Stored energy function of rubberlike materials derived from simple
  tensile data.
\newblock {\em Journal of Applied Physics}, 43(7):3064--3067, 1972.

\bibitem{rivlin1948large}
R.~Rivlin.
\newblock Large elastic deformations of isotropic materials. {IV.} { F}urther
  developments of the general theory.
\newblock {\em Philosophical Transactions of the Royal Society of London.
  Series A, Mathematical and Physical Sciences}, 241(835):379--397, 1948.

\bibitem{Rivlin2003}
R.~S. Rivlin.
\newblock The {Valanis--Landel} strain-energy function.
\newblock {\em Journal of Elasticity}, 73(1--3):291--297, 2003.

\bibitem{rivlin2006relation}
R.~S. Rivlin.
\newblock The relation between the {Valanis--Landel} and classical
  strain-energy functions.
\newblock {\em International Journal of Non-Linear Mechanics}, 41(1):141--145,
  2006.

\bibitem{ShermanMorrison1950}
J.~Sherman and W.~J. Morrison.
\newblock Adjustment of an inverse matrix corresponding to a change in one
  element of a given matrix.
\newblock {\em The Annals of Mathematical Statistics}, 21(1):124--127, 1950.

\bibitem{Shield1967}
R.~T. Shield.
\newblock Inverse deformation results in finite elasticity.
\newblock {\em Zeitschrift f{\"u}r Angewandte Mathematik und Physik},
  18(4):490--500, 1967.

\bibitem{SussmanBathe2009}
T.~Sussman and K.-J. Bathe.
\newblock A model of incompressible isotropic hyperelastic material behavior
  using spline interpolations of tension--compression test data.
\newblock {\em Communications in Numerical Methods in Engineering},
  25(1):53--63, 2009.

\bibitem{Valanis2022}
K.~C. Valanis.
\newblock The {Valanis--Landel} strain energy function: Elasticity of
  incompressible and compressible rubber-like materials.
\newblock {\em International Journal of Solids and Structures}, 238:111271,
  2022.

\bibitem{valanis1967strain}
K.C. Valanis and R.F. Landel.
\newblock The strain-energy function of a hyperelastic material in terms of the
  extension ratios.
\newblock {\em J. Appl. Phys.}, 38(7):2997--3002, 1967.

\bibitem{VitucciTrentadueDeTommasi2026}
G.~Vitucci, F.~Trentadue, and D.~De Tommasi.
\newblock Inversion symmetry in separable line, area and volume elastic
  energies.
\newblock {\em Continuum Mechanics and Thermodynamics}, 38:40, 2026.

\bibitem{ZubovRudev}
L.M. {Zubov} and A.N. {Rudev}.
\newblock {A criterion for the strong ellipticity of the equilibrium equations
  of an isotropic nonlinearly elastic material.}
\newblock {\em {Journal of Applied Mathematics and Mechanics}}, 75:432--446,
  2011.

\end{thebibliography}

\appendix 

\section{Inversion symmetry and Shield invariance on
\texorpdfstring{$\mathrm{SL}(n)$}{SL(n)}}\setcounter{equation}{0}
\label{sec:inverse-symmetry-shield}

The notion of inversion symmetry is closely related to the Shield
transformation. However, one must distinguish between inversion symmetry of
the total stored energy and inversion symmetry of a scalar function appearing
in a separable Valanis--Landel representation. In this section, we clarify
this distinction and compare the cases $\mathrm{SL}(2)$ and
$\mathrm{SL}(3)$.

Recall that the Shield transform \cite{Shield1967} of a stored energy
$
W\colon\mathrm{GL}^+(n)\mapsto\mathbb{R}
$
is defined by
\begin{align}
  W^\#(F)\coloneq\det F\cdot W(F^{-1}).
  \label{eq:shield-transform-SLn}
\end{align}
The energy is called Shield-invariant if
\begin{align}
  W^\#(F)=W(F)
  \qquad\forall\,F\in\mathrm{GL}^+(n).
  \label{eq:shield-invariance-GL}
\end{align}

\subsection{Restriction of the Shield transformation to
\texorpdfstring{$\mathrm{SL}(n)$}{SL(n)}}

On the special linear group, the determinant factor in the Shield
transformation disappears.
Let
$
W\colon\mathrm{SL}(n)\mapsto\mathbb{R}.
$
Then the restriction of the Shield transform to $\mathrm{SL}(n)$ is given
by
\begin{align}
  W^\#(F)=W(F^{-1}).
\end{align}
Consequently,
\begin{align}
  W \text{ is Shield-invariant on }\mathrm{SL}(n)
  \qquad\Longleftrightarrow\qquad
  W(F)=W(F^{-1})\qquad\forall\,F\in\mathrm{SL}(n).
  \label{eq:shield-inverse-equivalence}
\end{align}
Thus, on $\mathrm{SL}(n)$, Shield invariance is exactly the inverse
symmetry of the total stored energy.

This condition must be distinguished from the point-wise scalar condition
\begin{align}
  w(\lambda)=w(\lambda^{-1})\qquad\forall\,
  \lambda>0,
  \label{eq:inverse-symmetry-generator}
\end{align}
which may be imposed on a generating function in a Valanis--Landel
representation.

\subsection{The two-dimensional case}

The situation on $\mathrm{SL}(2)$ is particularly simple and transparent.

\begin{proposition}
\label{prop:automatic-shield-SL2}
Every objective and isotropic energy
$
W\colon\mathrm{SL}(2)\mapsto\mathbb{R}
$
is inversion-symmetric  and therefore Shield-invariant.
\end{proposition}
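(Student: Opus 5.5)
The plan is to reduce the statement to a fact about the singular values of $F$ and $F^{-1}$ in dimension two. Objectivity and isotropy mean $W(QFR)=W(F)$ for all $Q,R\in\SO(2)$. By the singular value decomposition, any $F\in\SL(2)$ can be written as $F=Q\,\mathrm{diag}(\lambda_1,\lambda_2)\,R$ with $Q,R\in\SO(2)$. The usual sign issue is harmless here: since $\det F=1>0$, both orthogonal factors can be chosen in $\SO(2)$ with $\lambda_1,\lambda_2>0$. Moreover $\lambda_1\lambda_2=\det F=1$, so $\lambda_2=\lambda_1^{-1}$. Writing $\lambda\coloneq\lambda_1$, the energy is therefore a function of the unordered pair $\{\lambda,\lambda^{-1}\}$ alone:
\[
W(F)=W(\mathrm{diag}(\lambda,\lambda^{-1})).
\]

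Next I compute $F^{-1}=R^{T}\,\mathrm{diag}(\lambda^{-1},\lambda)\,Q^{T}$, where $R^T,Q^T\in\SO(2)$. Hence $F^{-1}$ has the same unordered pair of singular values $\{\lambda^{-1},\lambda\}$, and
\[
W(F^{-1})=W(\mathrm{diag}(\lambda^{-1},\lambda)).
\]
It remains to show that $W(\mathrm{diag}(\lambda,\lambda^{-1}))=W(\mathrm{diag}(\lambda^{-1},\lambda))$, i.e.\ that swapping the diagonal entries does not change the energy. I would use the rotation by $\pi/2$,
\[
J=\begin{pmatrix}0&-1\\1&0\end{pmatrix}\in\SO(2),
\]
which satisfies $J\,\mathrm{diag}(\lambda,\lambda^{-1})\,J^{T}=\mathrm{diag}(\lambda^{-1},\lambda)$. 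Applying isotropy and objectivity with $J$ and $J^T$ then gives $W(F^{-1})=W(F)$. By the equivalence \eqref{eq:shield-inverse-equivalence}, this is exactly Shield invariance on $\SL(2)$.

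The only point needing care is the permutation step. I must make sure that swapping the singular values is achieved by proper rotations, not reflections, since isotropy may only be assumed under $\SO(2)$. The conjugation by $J$ settles this, so I expect no real obstacle. I would add a brief remark that the argument fails in $\SL(3)$. There the singular values of $F^{-1}$ are $\{\lambda_i^{-1}\}$, which in general is not a permutation of $\{\lambda_i\}$, because the constraint $\lambda_1\lambda_2\lambda_3=1$ no longer forces pairing. This presumably motivates the subsequent comparison in the paper.
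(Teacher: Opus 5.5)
Your proof is correct and follows the paper's argument. On $\SL(2)$ the singular values are $(\lambda,\lambda^{-1})$, those of $F^{-1}$ are the swapped pair, and objectivity plus isotropy make the energy insensitive to the swap. The paper simply asserts that the reduced function $g(\lambda_1,\lambda_2)$ is symmetric; you justify this explicitly by conjugating with the rotation $J$ by $\pi/2$, which is a worthwhile extra detail since only $\SO(2)$-invariance is assumed.
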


\begin{proof}
Let $\lambda_1,\lambda_2>0$ denote the singular values of
$F\in\mathrm{SL}(2)$. Since
$
\lambda_1\lambda_2=1,
$
there exists $\lambda>0$ such that
$
(\lambda_1,\lambda_2)
=
(\lambda,\lambda^{-1}).
$
By objectivity and isotropy, the energy can be written as
$
W(F)=g(\lambda_1,\lambda_2),
$
where $g$ is symmetric:
$
g(\lambda_1,\lambda_2)
=
g(\lambda_2,\lambda_1).
$
The singular values of $F^{-1}$ are
$
(\lambda_1^{-1},\lambda_2^{-1})
=
(\lambda^{-1},\lambda).
$
Therefore,
$
W(F^{-1})
=
g(\lambda^{-1},\lambda)
=
g(\lambda,\lambda^{-1})
=
W(F),
$
and the conclusion follows.
\end{proof}

In particular, consider a separable Valanis--Landel energy
\begin{align}
  W(F)=w(\lambda_1)+w(\lambda_2).
  \label{eq:VL-SL2}
\end{align}
On $\mathrm{SL}(2)$, one has
\[
W(F)
=
w(\lambda)+w(\lambda^{-1}).
\]
Hence,
\[
W(F^{-1})
=
w(\lambda^{-1})+w(\lambda)
=
W(F)
\]
for every scalar function $w$.

\begin{remark}
\label{rem:SL2-generator-symmetry}
On $\mathrm{SL}(2)$, inversion symmetry of the total objective and isotropic
energy is automatic.  By contrast, the scalar condition
$
w(\lambda)=w(\lambda^{-1})
$
is an additional constitutive assumption and is not required for Shield
invariance of the total Valanis--Landel energy.
\end{remark}

\subsection{The three-dimensional case}

The corresponding property is not automatic on $\mathrm{SL}(3)$. If the
singular values of $F\in\mathrm{SL}(3)$ are
$
\lambda_1,\lambda_2,\lambda_3>0,
$ such that $
\lambda_1\lambda_2\lambda_3=1,
$
then the singular values of $F^{-1}$ are
$
\lambda_1^{-1},\lambda_2^{-1},\lambda_3^{-1}.
$
In general, the reciprocal triple is not a permutation of the original
triple. For instance,
$
\mleft(2,2,\frac14\mright)
\ \mapsto\ 
\mleft(\frac12,\frac12,4\mright).
$
Consequently, objectivity and isotropy alone do not imply Shield invariance
on $\mathrm{SL}(3)$.
For example, consider the objective and isotropic energy
$
W(F)=\lambda_1+\lambda_2+\lambda_3.
$
For
$
F=\operatorname{diag}\mleft(2,2,\frac14\mright),
$
one has
$
W(F)=2+2+\frac14=\frac{17}{4},
$
whereas
$
W(F^{-1})
=
\frac12+\frac12+4
=
5.
$
Thus,
$
W(F)\neq W(F^{-1}).
$

\subsection{Generalized Valanis--Landel energies on
\texorpdfstring{$\mathrm{SL}(3)$}{SL(3)}}

Consider a generalized Valanis--Landel energy of the form
\begin{align}
  W_{\mathrm{VL}}(F)
  =
  \sum_{i=1}^3 f_{\lambda}(\lambda_i)
  +
  \sum_{i=1}^3 f_{\omega}(\omega_i)
  +
  f_J(J),
  \label{eq:generalized-VL}
\end{align}
where
$
J=\lambda_1\lambda_2\lambda_3
$
is the local volume ratio and
$
\omega_1=\lambda_2\lambda_3, 
$ $
\omega_2=\lambda_3\lambda_1, 
$
 $
\omega_3=\lambda_1\lambda_2
$
are the principal area stretches.

On $\mathrm{SL}(3)$, one has
$
J=1$ and $\omega_i=\lambda_i^{-1}.
$
Therefore,
\begin{align}
  W_{\mathrm{VL}}(F)
  &=
  \sum_{i=1}^3 f_{\lambda}(\lambda_i)
  +
  \sum_{i=1}^3 f_{\omega}(\lambda_i^{-1})
  +
  f_J(1)
 =
  \sum_{i=1}^3 w(\lambda_i)+f_J(1),
  \label{eq:effective-VL-SL3}
\end{align}
where
$
  w(\lambda)
  \coloneq
  f_{\lambda}(\lambda)
  +
  f_{\omega}(\lambda^{-1}).
$
Thus, after restriction to $\mathrm{SL}(3)$, the separate line and area
contributions enter only through the effective scalar generator $w$.

\begin{proposition}
\label{prop:scalar-inverse-implies-shield}
Let
$
W(F)=\sum_i w(\lambda_i),
\ 
F\in\mathrm{SL}(3).
$
If
$
  w(\lambda)=w(\lambda^{-1})
  \ 
  \text{for all }\lambda>0,
$
then $W$ is Shield-invariant on $\mathrm{SL}(3)$.
\end{proposition}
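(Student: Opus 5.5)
By \eqref{eq:shield-inverse-equivalence}, Shield invariance on $\SL(3)$ is the same as $W(F)=W(F^{-1})$ for every $F\in\SL(3)$. The plan is therefore to compute both sides in terms of singular values and compare them term by term.

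First, take $F\in\SL(3)$ with right polar decomposition $F=R\,U$ as in \eqref{eq:polar}, and let $\lambda_1,\lambda_2,\lambda_3$ be the singular values of $F$. Then $F^{-1}=U^{-1}R^T$, which again lies in $\SL(3)$ because $\det F^{-1}=1$.

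Next, identify the singular values of $F^{-1}$. Right multiplication by $R^T\in\SO(3)$ does not change singular values, so $\nu_i(F^{-1})=\nu_i(U^{-1})$. Since $U^{-1}$ is symmetric positive definite with the same eigenvectors as $U$, its singular values are exactly its eigenvalues, namely $\lambda_i^{-1}$. Hence, up to permutation,
\[
  \nu_i(F^{-1})=\lambda_i^{-1},\qquad i\in\{1,2,3\}.
\]

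Then the comparison follows directly. Because $W$ is a symmetric sum, the ordering of the singular values is irrelevant, and
\[
  W(F^{-1})=\sum_{i=1}^3 w\mleft(\lambda_i^{-1}\mright)=\sum_{i=1}^3 w(\lambda_i)=W(F),
\]
where the middle equality is the term-wise hypothesis $w(\lambda)=w(\lambda^{-1})$ applied to each summand.

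No step is a real obstacle. The only points needing care are that $F^{-1}$ stays in $\SL(3)$, so that the restricted Shield transform $W^\#(F)=W(F^{-1})$ is the right object, and the remark that the permutation ambiguity in the singular values does not affect a symmetric sum. The triple $(2,2,\frac14)$ from the previous subsection shows that without term-wise symmetry the reciprocal triple need not be a permutation of the original, so the pointwise hypothesis on $w$ is exactly what is used here.
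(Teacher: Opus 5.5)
Your proposal is correct and follows the paper's proof: you reduce Shield invariance on $\SL(3)$ to $W(F)=W(F^{-1})$, note that the singular values of $F^{-1}$ are the $\lambda_i^{-1}$, and apply $w(\lambda)=w(\lambda^{-1})$ term by term. Your polar-decomposition justification of $\nu_i(F^{-1})=\lambda_i^{-1}$ and the remark on permutation invariance of the sum fill in a step the paper only states.
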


\begin{proof}
The singular values of $F^{-1}$ are
$\lambda_1^{-1},\lambda_2^{-1},\lambda_3^{-1}$. Therefore,
$
W(F^{-1})
=
\sum_i w(\lambda_i^{-1})
=
\sum_i w(\lambda_i)
=
W(F)
$
and the conclusion follows.
\end{proof}

In particular, suppose that
$
  f_{\lambda}(\lambda)
  =
  f_{\lambda}(\lambda^{-1}),
\ 
  f_{\omega}(\lambda)
  =
  f_{\omega}(\lambda^{-1}).
  \label{eq:inverse-symmetry-area}
$
Then
$
w(\lambda^{-1})
=
f_{\lambda}(\lambda^{-1})
+
f_{\omega}(\lambda)
 =
f_{\lambda}(\lambda)
+
f_{\omega}(\lambda^{-1})
$ $ =
w(\lambda).
$
Hence, the inverse-symmetry assumptions imposed on the line and area
contributions provide a direct construction of Shield-invariant energies
on $\mathrm{SL}(3)$.

The converse must be formulated at the level of the total energy. Shield
invariance does not necessarily imply that a prescribed generating
function $w$ is pointwise inversion-symmetric .

\begin{theorem}
\label{thm:VL-shield-characterization-SL3}
Let $w\colon(0,\infty)\mapsto\mathbb{R}$ be continuous, and define
$
  W(F)=\sum_i w(\lambda_i)$ for all $F\in\mathrm{SL}(3)$.
Then the following statements are equivalent:

\begin{enumerate}
\renewcommand{\labelenumi}{(\roman{enumi})}

\item
The energy $W$ is Shield-invariant on $\mathrm{SL}(3)$.

\item
There exists a constant $c\in\mathbb{R}$ such that
$
  w(\lambda)-w(\lambda^{-1})
  =
  c\log\lambda
  \ 
  \text{for all }\lambda>0.
$

\item
There exists a continuous function
$\widetilde{w}\colon(0,\infty)\mapsto\mathbb{R}$ satisfying
$
  \widetilde{w}(\lambda)
  =
  \widetilde{w}(\lambda^{-1})
  \ 
  \text{for all }\lambda>0,
$
such that
$
  W(F)
  =
  \sum_i \widetilde{w}(\lambda_i)
  \ 
  \text{for all }F\in\mathrm{SL}(3).
 $

\end{enumerate}
\end{theorem}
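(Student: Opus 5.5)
I would prove the cycle (iii)$\Rightarrow$(i)$\Rightarrow$(ii)$\Rightarrow$(iii).

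The implication (iii)$\Rightarrow$(i) is immediate from \cref{prop:scalar-inverse-implies-shield} applied to $\widetilde w$. The map $F\mapsto W(F)$ does not depend on which generator is used, so Shield invariance of $\sum_i\widetilde w(\lambda_i)$ is Shield invariance of $W$.

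For (ii)$\Rightarrow$(iii), set $\widetilde w(\lambda)\coloneq w(\lambda)-\frac{c}{2}\log\lambda$. This function is continuous, and by (ii)
\[
\widetilde w(\lambda^{-1})=w(\lambda^{-1})+\tfrac{c}{2}\log\lambda=w(\lambda)-c\log\lambda+\tfrac c2\log\lambda=\widetilde w(\lambda).
\]
On $\SL(3)$ we have $\sum_i\log\lambda_i=\log\det U=0$, so $\sum_i\widetilde w(\lambda_i)=\sum_i w(\lambda_i)=W(F)$. This is the ``invisible logarithmic null term'' mentioned after \cref{thm:main}.

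The substantive step is (i)$\Rightarrow$(ii). Set $\phi(x)\coloneq w(e^x)-w(e^{-x})$ for $x\in\R$. It is continuous and odd, so $\phi(0)=0$. Every triple $(x_1,x_2,x_3)$ with $x_1+x_2+x_3=0$ is realized by $F=\operatorname{diag}(e^{x_1},e^{x_2},e^{x_3})\in\SL(3)$, and $F^{-1}$ has singular values $e^{-x_i}$. Shield invariance on $\SL(3)$, in the form \eqref{eq:shield-inverse-equivalence}, therefore gives
\[
\phi(x_1)+\phi(x_2)+\phi(-x_1-x_2)=0\qquad\forall\,x_1,x_2\in\R.
\]
Using oddness, this becomes $\phi(x_1+x_2)=\phi(x_1)+\phi(x_2)$, which is Cauchy's functional equation. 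Continuity of $w$ makes $\phi$ continuous, so $\phi(x)=cx$ with $c=\phi(1)$ (see \cite{Kuczma2009}). Substituting $x=\log\lambda$ gives (ii).

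I expect no serious obstacle. The only point needing care is that every trace-free log-triple actually arises from some $F\in\SL(3)$, which the diagonal matrices above secure. Continuity is then exactly what excludes the pathological additive solutions of Cauchy's equation.
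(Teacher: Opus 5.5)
Your proposal is correct and follows the paper's proof essentially step for step. For (i)$\Rightarrow$(ii) you reduce the odd continuous function $w(e^x)-w(e^{-x})$ to Cauchy's equation via $x_3=-x_1-x_2$; for (ii)$\Rightarrow$(iii) you use the shift $\widetilde w=w-\frac{c}{2}\log\lambda$; and for (iii)$\Rightarrow$(i) you use the reciprocal-singular-value computation, which is the content of \cref{prop:scalar-inverse-implies-shield}. Your remark that $\operatorname{diag}(e^{x_1},e^{x_2},e^{x_3})$ realizes every trace-free log-triple makes explicit a point the paper leaves implicit.
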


\begin{proof}
Assume first that $W$ is Shield-invariant. Let
$
\lambda_i=e^{x_i},
\ 
x_1+x_2+x_3=0,
$
and define
$
  q(x)
  \coloneq
  w(e^x)-w(e^{-x}).
  \label{eq:def-q}
$
The function $q$ is continuous and odd.
Shield invariance gives
\[
\sum_i w(e^{x_i})
=
\sum_i w(e^{-x_i}),
\]
and hence
$
  q(x_1)+q(x_2)+q(x_3)=0
$ $
  \text{whenever }x_1+x_2+x_3=0.
$

Choose
$
x_1=x,
$ $
x_2=y,
$ $
x_3=-x-y.
$
Then
$
q(x)+q(y)+q(-x-y)=0.
$
Since $q$ is odd,
$
q(x+y)=q(x)+q(y).
$
Thus, $q$ satisfies the Cauchy functional equation \cite{Kuczma2009}. Since $q$ is
continuous, there exists $c\in\mathbb{R}$ such that
$
q(x)=cx.
$
Taking $\lambda=e^x$, we obtain
$
w(\lambda)-w(\lambda^{-1})
=
c\log\lambda.
$
This proves that \textnormal{(i)} implies \textnormal{(ii)}.

Assume now that \textnormal{(ii)} holds and define
$
  \widetilde{w}(\lambda)
  \coloneq
  w(\lambda)-\frac{c}{2}\log\lambda.
$
Then
$
\widetilde{w}(\lambda)-\widetilde{w}(\lambda^{-1})
=
w(\lambda)-w(\lambda^{-1})
-c\log\lambda
=0.
$
Therefore,
$
\widetilde{w}(\lambda)
=
\widetilde{w}(\lambda^{-1}).
$

Moreover, if $F\in\mathrm{SL}(3)$, then
\[
\begin{aligned}
\sum_{i=1}^3 \widetilde{w}(\lambda_i)
&=
\sum_{i=1}^3 w(\lambda_i)
-
\frac{c}{2}\sum_{i=1}^3 \log\lambda_i
=
\sum_{i=1}^3 w(\lambda_i)
-
\frac{c}{2}\log(\lambda_1\lambda_2\lambda_3)
=
\sum_{i=1}^3 w(\lambda_i).
\end{aligned}
\]
This proves that \textnormal{(ii)} implies \textnormal{(iii)}.
Finally, assume that \textnormal{(iii)} holds. Then
\[
\begin{aligned}
W(F^{-1})
&=
\sum_{i=1}^3 \widetilde{w}(\lambda_i^{-1})
=
\sum_{i=1}^3 \widetilde{w}(\lambda_i)
=
W(F).
\end{aligned}
\]
Thus, $W$ is Shield-invariant, proving that \textnormal{(iii)} implies
\textnormal{(i)}.
\end{proof}

\begin{remark}
\label{rem:canonical-symmetric-generator}
Under the assumptions of
Theorem~\ref{thm:VL-shield-characterization-SL3}, the function
$
  w_{\mathrm{sym}}(\lambda)
  \coloneq
  \frac12
  \mleft(
  w(\lambda)+w(\lambda^{-1})
  \mright)
  \label{eq:canonical-symmetric-generator}
$
is inversion-symmetric. Moreover, Shield invariance implies
$
\sum_i w_{\mathrm{sym}}(\lambda_i)
=
\sum_i w(\lambda_i)
$ $
\text{on }\mathrm{SL}(3).
$
Thus, every continuous Shield-invariant Valanis--Landel energy on
$\mathrm{SL}(3)$ admits a canonical inversion-symmetric  scalar
representative.
\end{remark}

\begin{remark}
\label{rem:logarithmic-null-term}
The term $c\log\lambda$ is invisible in a Valanis--Landel energy restricted
to $\mathrm{SL}(3)$, since
\[
\sum_{i=1}^3 c\log\lambda_i
=
c\log(\lambda_1\lambda_2\lambda_3)
=
c\log\det F
=
0.
\]
Consequently, inversion symmetry of a particular scalar generator is not
strictly necessary. Nevertheless, every continuous Shield-invariant
Valanis--Landel energy on $\mathrm{SL}(3)$ admits an equivalent
inversion-symmetric  generator.
\end{remark}

The preceding results reveal an essential difference between dimensions
two and three:
\begin{align}
\begin{array}{ll}
\mathrm{SL}(2):
&
\text{inversion symmetry of an objective and isotropic total energy is
automatic},
\\[2mm]
\mathrm{SL}(3):
&
\text{inversion symmetry is a genuine constitutive restriction}.
\end{array}
\label{eq:SL2-SL3-comparison}
\end{align}

For separable Valanis--Landel energies, this distinction is particularly
transparent. On $\mathrm{SL}(2)$, the principal stretches occur as the pair
$
(\lambda,\lambda^{-1}),
$
so that
$
w(\lambda)+w(\lambda^{-1})
$
is inversion-symmetric  for every scalar function $w$.

On $\mathrm{SL}(3)$, the condition
$
\lambda_1\lambda_2\lambda_3=1
$
does not pair each principal stretch with its reciprocal. Hence, inverse
symmetry becomes a nontrivial restriction on the total energy.
 \end{footnotesize}

\end{document}